\documentclass[a4paper,10pt,leqno]{nyjm}
\usepackage[english]{babel}
\usepackage{calc}
\usepackage{pdfsync}
\usepackage{enumerate,amssymb}
\usepackage{verbatim}
\usepackage[leqno]{amsmath}
\usepackage[arrow,curve,matrix,tips,2cell]{xy}
 \SelectTips{eu}{10} \UseTips
  \UseAllTwocells
\usepackage{graphicx}
\usepackage{pstricks}
\usepackage{pst-3dplot}

\usepackage{amscd}
\usepackage[mathscr]{euscript}
\usepackage{pb-diagram}
\theoremstyle{plain}
\numberwithin{equation}{section}
\newtheorem{defin}{Definition}[section]

\newtheorem{lema}[defin]{Lemma}

\newtheorem{prop}[defin]{Proposition}
\newtheorem{teor}[defin]{Theorem}

\newtheorem{remark}[defin]{Remark}

\def\C{\mathbb{C}}

\def\Z{\mathbb{Z}}

\def\frak{\mathfrak{K}}
\def\c{\mathcal{C}}
\def\frakb{\mathfrak{b}}
\def\d{\mathcal{D}}

\def\k{\underline{k}}

\def\hu{\mathbb{H}}

\def\hu{\mathbb{H}}
\def\K{\mathfrak{K}}
\def\kcal{\mathcal{D}}
\def\A{\mathfrak{A}}
\def\Hom{\operatorname{Hom}}
\def\calu{\mathcal{U}}
\def\Map{\operatorname{Map}}
\def\Im{\operatorname{Im}}

\newcommand{\Fred}{\ensuremath{{\mathrm{Fred}}}}
\def\Mod{\mathcal{MOD}}

\newcounter{commentcounter}
\newcommand{\commentm}[1]
{\stepcounter{commentcounter} {\bf Comment~\arabic{commentcounter}
(by M.): } {\ttfamily #1} }

\begin{document}
\title[A description of the assembly map]{A description of the assembly map for the Baum-Connes conjecture with coefficients}
\author{Mario Vel\'asquez}

\address{Depatamento de Matem\'aticas
\\Pontificia Universidad Javeriana\\Cra. 7 No. 43-82 - Edificio Carlos Ortíz 5to piso\\ Bogot\'a D.C, Colombia}
 \email{mario.velasquez@javeriana.edu.co}
         \date{\today}

\subjclass{Primary 19L41, 19L47; Secondary 55N91, 19K35}

\keywords{equivariant K-homology, configuration spaces, assembly map, Baum-Connes conjecture}

\begin{abstract}
 In this note we set a configuration space description of the equivariant connective K-homology groups with coefficients in a unital C*-algebra for proper actions. Over this model we define a connective assembly map and prove that in this setting is possible to recover the analytic assembly map.
\end{abstract}\maketitle
\section{Introduction}
Let $G$ be a discrete group and $B$ a separable $G$-C$^*$-algebra. The purpose of this note is to give a configuration space description of $G$-\emph{equivariant connective K-homology groups} with coefficients in $B$ on the category of proper $G$-CW-complex. We use that model to give a description of the analytic assembly map for the Baum-Connes conjecture with coefficients. This work is a continuation of \cite{velasquez1}, and most of the results and proofs in Section 2 are generalizations of this paper. 

The Baum-Connes conjecture with coefficients predicts that the assembly map
$$\mu_i^G:RKK_i^G(C_0(\underbar{E}G),B)\to KK_i(\C,B\rtimes_rG)$$
is an isomorphism.
The space $\underbar{E}G$ is the classifying space for proper actions (see \cite{BCH}, Def. 1.6). The group $RKK_i^G(C_0(\underbar{E}G),B)$ can be defined as
$$RKK_i^G(C_0(\underbar{E}G),B)=\varinjlim_{X\subseteq\scriptsize{\underbar{E}}G, X \text{co-compact}}KK_i^G(C_0(X),B).$$And $B\rtimes_rG$ is the reduced crossed product.

We give a description of the group $KK_i^G(C_0(X),B)$ in terms of configuration spaces. It is described as a limit of \emph{$G$-equivariant connective K-homology groups of $X$ with coefficients in $B$}, see Definition \ref{connective}.

The idea to use configuration spaces to describe homology theories appears in \cite{dt58}, there the authors prove that the reduced singular homology groups can be described as the homotopy groups of the symmetric product, moreover the symmetric product can be described as the configuration space with labels on natural numbers, later Graeme Segal in \cite{segal1977} extend this idea to describe connective K-homology. In this case one should consider configuration spaces with labels on the set of mutually orthogonal finite dimensional subspaces of a fixed Hilbert space. We generalize this idea taking coefficients in a separable unital $G$-C*-algebra.

Results obtained here are related with descriptions of the assembly map using controlled categories as in \cite{pedersen}, where we use configuration spaces instead of geo\-me\-tric modules.

This note is organized as follows:

In Section 2 we introduce the configuration space and relate it with some space of operators. In Section 3 we prove that equivariant connective K-homology groups with coefficients can be represented as the homotopy groups of the orbits of the configuration space defined in Section 2. In Section 4 we reformulate the analytic assembly map for the Baum-Connes conjecture with coefficients in terms of configurations spaces.

\section*{Preliminaries}
Let $G$ be a discrete group. Let $X$ and $Y$ be (left) $G$-spaces. There is a canonical (left) $G$-action on the set of continuous maps from $X$ to $Y$ defined by
\begin{align*}
G\times \operatorname{Maps}(X,Y)&\longrightarrow\operatorname{Maps}(X,Y)\\
(g,f)&\longmapsto (x\mapsto g(f(g^{-1}x))).
\end{align*}

A \emph{$G$-CW-pair $(X,Y)$} is a pair of $G$-CW-complexes. It is called \textit{proper} if all isotropy groups of $X$ are finite. Information about $G$-CW-pairs can be found in \cite[Section 1 and 2]{luck1989}.

Given a $G$-CW-pair $(X,Y)$ we denote by $C_0(X,Y)$ the C*-algebra of continuous maps from $X$ to $\C$ that vanish at $Y$ and at infinity.
When $Y=\emptyset$ we set $C_0(X,\emptyset)=C_0(X\coprod\{+\},\{+\})$ where $G$ acts trivially on $+$ and $X\coprod\{+\}$ has the topology of the one point compactification. We denote by $\Sigma X$ the reduced suspension of $X$, and define $\Sigma\emptyset$ as $S^0$ with the trivial $G$-action.
\begin{defin}A $G$-C$^\ast$-algebra is a $\Z/2\Z$-graded C$^\ast$-algebra equipped with a $G$-action by $^\ast$-automorphisms.
\end{defin}
\begin{defin}

Let $B$ be a $G$-C*-algebra. A ($\Z/2\Z$-graded) \emph{pre-Hilbert $G$-module over $B$} is a left $B$-module $E$ with a $G$-action and  a $B$-valued inner product $\langle,\rangle:E\times E\to B$ satisfying:

\begin{enumerate}
	\item  $g\cdot (\eta b)=(g\cdot \eta)a$.
	\item $g\mapsto g\cdot\eta$ is continuous.
	\item $\langle g\cdot\eta,g\cdot\xi\rangle=g\cdot\langle\eta,\xi\rangle$.
\end{enumerate}For $\eta,\xi\in E$, $g\in G$, and $b\in B$.  If $E$ is complete respect to the norm $||x||=||\langle x,x\rangle||^{1/2}$ we say that $E$ is a \emph{Hilbert $G$-module over $B$}. Details about Hilbert $G$-modules can be found in \cite{manuilov-hilbert}.
\end{defin}
 $B$ is itself a Hilbert $G$-module over $B$, we can of course also form $B^n$. We denote by $B^\infty$ the pre-Hilbert $G$-module over $B$ given by the algebraic direct sum
 $$B^\infty =\bigoplus_{n=0}^\infty B^n.$$Let $\mathcal{H}_B$ be the completion of $B^\infty$ with respect to the norm defined in \cite[Pg. 6]{lance} . We denote by $M_n(B)$ the C*-algebra of endomorphism of $B^n$. 
 
 On the other hand let
 $E$ be a pre-Hilbert $B$-module, we denote by $\mathfrak{B}(E)$ to the set of all continuous module homomorphisms $T:E\to E$ for which there is an adjoint continuous module homomorphism $T^*:E\to E$ with $\langle Tx, y\rangle =\langle x,T^*y \rangle$ for all $x,y\in E$. The Hilbert $B$-module $\frak(E)$ is defined as the closure of the pre-Hilbert $B$-module of \emph{finite-rank} operators on $E$ (defined as in \cite[pg. 9-10]{lance}), when $E=\mathcal{H}_B$ we denote $\frak(\mathcal{H}_B)$ simply by $\frak_B$.
Let $B$ be a $G$-C*-algebra, then $$C_c(G,B)=\{f:G\to B\mid f\text{ is continuous with compact support} \}$$ becomes a *-algebra with respect to convolution and the usual involution. Similarly one can define
$${l}^2(G,B)=\left\{\chi:G\to B\mid \sum_{g\in G}\chi(g)^*\chi(g)\text{ converges in }B\right\}.$$Endowed with the norm $$||\chi||=||\sum_{g\in G}\chi(g)^*\chi(g)||$$ $l^2(G,B)$ is a Banach space. The left regular representation $\lambda_{G,B}$ of $C_c(G,B)$ on $l^2(G,B)$ is given by

$$(\lambda_{G,B}(f)\chi)(h)=\sum_{g\in G}h^{-1}f(g)\chi(g^{-1}h)$$

for each $f\in C_c(G,B)$ and $\chi\in l^2(G,B)$

\begin{defin}
  The reduced crossed product $B\rtimes_rG$ is the operator norm closure of $\lambda_{G,B}(C_c(G,B))$ in $\mathfrak{B}(l^2(G,B))$. \end{defin} 



In order to define the configuration space we need to recall the symmetric product.

\begin{defin}Let $(X,x_0)$ be a based CW-complex. Consider the natural action of the symmetric group $\mathfrak{S}_n$ over $X^n$. The orbit space of this action 
	$$SP^n(X)=X^n/\mathfrak{S}_n$$
	provided with the quotient topology is called the \textit{n-th symmetric product} of $X$. We denote elements in $SP^n(X)$ as formal sums
	$$\sum_{i=1}^nx_i,$$where $x_i\in X$.
\end{defin}
\section{Equivariant connective K-homology and configuration spaces}
 Let $(X,Y)$ be a proper $G$-CW-pair and $B$ a unital separable $G$-C*-algebra. In this section we construct  a configuration space $\d_G(X,Y,B)$ representing the equivariant connective K-homology groups with coefficients in $B$. First we will prove that the homotopy groups of $\d_G(-,-,B)/G$ form an equivariant homology theory in the sense of \cite{lu2002}, then we define a natural transformation from this functor to equivariant KK-theory groups that is an isomorphism in proper orbits over positive indexes.
 \subsection{Configuration space}
   \begin{defin}
   	Let $(X,Y)$ be a $G$-CW-pair (non-necessarily proper). Let $B$ be a separable unital $G$-C*-algebra. We say that a bounded *-homomorphism $$F:C_0(X,Y)\to M_n(B)$$ is \textit{strongly diagonalizable} if there  are finitely generated mutually orthogonal submodules $M_0,\ldots,M_k$ of $B^n$ with $$B^n=\bigoplus_{i=0}^kM_i$$ and characters $x_i:C_0(X,Y)\to\C$, for $i=0,\ldots k$, with $x_0\equiv 0$ (the zero character) such that for every $f\in C_0(X,Y)$ and $v_i\in M_i$
   	$$F(f)(v_i)=x_i(f)v_i .$$
   	The space of the strongly diagonalizable operators from $C_0(X,Y)$ to $M_n(B)$ with the compact-open topology is denoted by $$\c_G^n(X,Y;B)$$
   \end{defin}
   Note that each $M_i$ in the above definition is a projective $B$-Hilbert module.
  \begin{defin}Let $G$ be a discrete group, $B$ be a separable $G$-$C^*$-algebra and $(X,Y)$ be a $G$-connected $G$-CW-pair. There is a natural inclusion $$\c_G^n(X,Y;B)\subseteq\c_G^{n+1}(X,Y;B).$$
  Let $\c_G(X,Y;B)$ be the $G$-space defined as
$$\c_G(X,Y;B)=\bigcup_{n\geq0}\c_G^n(X,Y;B),$$with the weak topolo\-gy. The $G$-action is defined as follows
\begin{align*}G\times \c_G(X,Y;B)&\to \c_G(X,Y;B)\\
	      (g,F)&\mapsto g\cdot F=(f\mapsto g[F(gf)]g^{-1}),\end{align*}
	    for every $f\in C_0(X,Y)$.
  \end{defin}
  Now we will describe the space $\c_G(X,Y;B)$ as a configuration space.
  
  \begin{defin}Let $B$ be a separable  $G$-C*-algebra.
  	 Let $M$ be a Hilbert $B$-module, we say that $M$ \emph{free of rank $n$} if $M$ is isomorphic to $B^n$.
  	
  	Let  $L$ be a pre-Hilbert $G$-module over $B$, define the topological partial monoid $\Mod_BL$ whose elements are closed, finitely  generated projective $B$-submodules of $L$, with the operation $\oplus$ defined only when the $B$-modules are orthogonal in $L$. We have a natural topology on $\Mod_BL$, considering it as a as a subspace of $\mathfrak{B}(L)$ (viewed as the space of projections). The canonical base point is the zero operator ${\bf 0}$.

  \end{defin}
  
  Let $(X,Y)$ be a $G$-CW-pair, let $B$ be a separable $G$-C*-algebra and $L$ a pre-Hilbert module over $B$, define the sets $\d_{G,n}(X,Y;B,L)$ as follows. Here we follows ideas of \cite{mostovoy}. Notice that in this we do not endow to the sets $\d_{G,n}(X,Y;B,L)$ with a topology, this will be done in Theorem \ref{topologyconf}.
  
  Let $\mathcal{W}_n\subseteq SP^n((X/Y)\wedge\Mod_{B}L,(\{Y\},{\bf0}))$ whose elements are sums $$\sum_{i=1}^n(x_i,M_i)$$ such that every pair of elements in $\{M_1,\ldots,M_n\}$ are composable. $\mathcal{W}_0$ is defined as a point.
  
  The set $\d_{G,n}(X,Y;B,L)$ is the quotient of $\mathcal{W}_n$ by the relations
  $$(x,M_1')+(x,M_2')+W=(x,M_1'\oplus M_2')+W,$$ for every $W\in\mathcal{W}_n$. And
  $$(x,{\bf0})=(+,{\bf0})=(\{Y\},M),$$ for every $x\in X/Y$ and for every $M\in\Mod_{B}L$. 
  
     There is a natural inclusion of $\d_{G,n}(X,Y;B)$ on $\d_{G,n+1}(X,Y;B)$ given by add $(+,{\bf0})$.

   Over the set $\d_{G,n}(X,Y;B,L)$ we have a $G$-action induced from the $G$-actions of $G$ over $X$ and $B$, defined as follows:
   
   $$g\cdot \left(\sum_{i=1}^n(x_i,M_i)\right)=\sum_{i=1}^n(gx_i,gM_i),$$for every $g\in G$ and $\sum_{i=1}^n(x_i,M_i)\in \d_{G,n}(X,Y;B)$.

   Endowed with that action $\d_{G,n}(X,Y;B,L)$ is a $G$-invariant closed subspace of $\d_{G,{n+1}}(X,Y;B,L)$.
   
   Define the configuration space as the increasing union $$\d_G(X,Y;B,L)=\bigcup_{n\geq0}\d_{G,n}(X,Y;B,L).$$
   When $L=B^\infty$ we denote $\d_G(X,Y;B,L)$ just by $\d_G(X,Y;B)$.
  
  As the elements in $\c_G(X,Y;B)$ can be diagonalized we have the following result
  \begin{teor}\label{topologyconf}Let $(X,Y)$ be a $G$-connected, $G$-CW-pair, then there is a natural bijection of $G$-sets
  	$$\c_G(X,Y;B)\xrightarrow{}\d_G(X,Y;B).$$
  \end{teor}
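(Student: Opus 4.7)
The plan is to exhibit explicit mutually inverse continuous $G$-equivariant maps, exploiting the standard identification between characters of $C_0(X,Y)$ and points of $X/Y$: a point $x\in X\setminus Y$ corresponds to evaluation $\mathrm{ev}_x$, while the basepoint class $\{Y\}$ corresponds to the zero character. In this dictionary, ``strongly diagonalizable'' is precisely the condition that $F$ be a finite orthogonal sum of rank-one projective blocks, each labelled by a point of $X/Y$.

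First I would define $\Phi_n:\c_G^n(X,Y;B)\to\d_{G,n}(X,Y;B)$ by sending a strongly diagonalizable $F(f)(b)=\sum_{i=1}^n x_i(f)\langle b,v_i\rangle v_i$ to $\sum_{i=1}^n(\bar x_i,\,v_iB)$, where $\bar x_i\in X/Y$ is the point corresponding to $x_i$ and $v_iB\subseteq B^\infty$ is the rank-one projective submodule generated by $v_i$. I would then check that $\Phi_n$ is independent of the diagonalization: the only ambiguity is a unitary rotation on each joint eigenspace, i.e.\ on the subspace spanned by those $v_i$ whose characters coincide, and the relation $(x,M_1')+(x,M_2')=(x,M_1'\oplus M_2')$ in $\d_{G,n}$ absorbs exactly this ambiguity. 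Moreover, zero characters give $(+,{\bf 0})$ and vanishing $v_i$ give $(\bar x_i,{\bf 0})$, both of which are identified with the basepoint. These maps are compatible with the inclusions $\c_G^n\hookrightarrow\c_G^{n+1}$ and $\d_{G,n}\hookrightarrow\d_{G,n+1}$, and so assemble into $\Phi:\c_G(X,Y;B)\to\d_G(X,Y;B)$.

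For the inverse, given $\sum_{i=1}^n(x_i,M_i)\in\d_{G,n}(X,Y;B)$, with pairwise orthogonal $M_i$, pick orthonormal bases of the $M_i$ (a choice of basis is not used in what follows, so well-definedness is clean) and define $\Psi(\sum_i(x_i,M_i))(f)$ to be the operator that acts on $M_i$ by the scalar $x_i(f)$ (viewing $x_i$ as a character of $C_0(X,Y)$) and by zero on the orthogonal complement. I would verify that the relations defining $\d_G$ send $\Psi$ to the same $*$-homomorphism, that $\Psi$ lands in $\c_G$, and that $\Phi\circ\Psi$ and $\Psi\circ\Phi$ are identities; these are formal. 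Equivariance with respect to $G$ is immediate from comparing the two actions: the formula $g\cdot F=(f\mapsto g[F(gf)]g^{-1})$ replaces $v_i$ by $gv_i$ (hence $v_iB$ by $gv_iB$) and $x_i$ by $g\cdot x_i$, matching the action on $\d_G$.

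The main obstacle I expect is continuity of $\Phi$ and $\Psi$. On $\c_G$ the topology is compact-open, i.e.\ uniform-on-compacta convergence of operators on $C_0(X,Y)$ into $M_n(B)$; on $\d_G$ the topology is the quotient of the symmetric-product topology built from $X/Y$ and from $\Mod_B L\subseteq \mathfrak{B}(L)$. The delicate point is that eigenvalue crossings in $\c_G$ (where two characters $\bar x_i,\bar x_j$ merge) correspond on the $\d_G$-side to the relation merging $v_iB\oplus v_jB$ into a single rank-two label; individual eigenvectors are not continuous across such crossings, but the joint projection onto the merged eigenspace is. The strategy is therefore to check continuity not via individual eigenvectors but via the spectral projections associated to open neighbourhoods in $X/Y$: for each relatively compact open $U\subseteq X/Y$ whose closure avoids $\{Y\}$, the projection $\sum_{\bar x_i\in U} p_{v_iB}$ depends continuously on $F$ in $\mathfrak{B}(B^\infty)$, and this family of partial projections determines the image in $\d_G$. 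Continuity of the inverse follows similarly because a continuous family in $\d_{G,n}$ lifts, locally, to a continuous family of characters together with a continuous family of orthogonal projections, from which the operator $\Psi(\cdot)$ is built pointwise-continuously.
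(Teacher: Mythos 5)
Your proposal is correct and follows essentially the same route as the paper: both directions are given by the explicit diagonalization dictionary (characters $\leftrightarrow$ points of $X/Y$, eigenspaces $\leftrightarrow$ labels), with the forward map's continuity handled via spectral projections rather than individual eigenvectors (the paper delegates this to a Bhatia-type perturbation argument) and the inverse's continuity via descending a continuous map through the quotient relations. Your discussion of eigenvalue crossings and well-definedness under change of diagonalization is a more explicit account of the same point the paper treats by citation.
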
 
  \begin{proof}Let $F\in\c_G^n(X,Y;B)$, then there are characters $x_i:C_0(X)\to\C$ for $i=0,\ldots,n$ (with $x_0\equiv 0$) and mutually orthogonal submodules $M_0,\ldots,M_n$ such that
  	$$F(f)(v_i)=x_i(f)v_i,$$ for all $v_i\in M_i$. Note that each $M_i$ is a closed finitely generated projective $B$-submodule of $B^n$, then we can define a $G$-map 
  	\begin{align*}
  	\c_G^n(X,Y;B)&\xrightarrow{\Phi_n}\d_{G,n}(X,Y;B)\\
  	F&\mapsto\sum_{i=1}^{n}(x_i,M_i).\end{align*}
  	
  	On the other hand, let 
  	$$\sum_{i=1}^{n}(x_i,M_i)\in\d_{G,n}(X,Y;B)$$
  	one can associate a unique operator $F\in\c_G^N(X,Y;B)$ for $N$ large enough,  such that its eigenvalues are given by the corresponding characters $x_i:C_0(X)\to\C$, and where each $x_i$ has associated the eigenspace $M_i$.
  	
  	We define 
  	\begin{align*}
  	\Xi_n:\d_{G,n}(X,Y;B)&\to\c_G^N(X,Y;B)\\
  	\sum_{i=1}^{n}(x_i,M_i)&\mapsto F.\end{align*}
  	Then $\Xi_n$ is well defined  being the quotient of the following map.
  	\begin{align*}\mathcal{W}_n&\xrightarrow{\chi}\c_G^N(X,Y;B)\\
  	\sum_{i=1}^n(x_i,M_i)&\mapsto F\end{align*} that satisfy the following conditions
  	\begin{itemize}
  		\item $\chi((x,M_1')+(x,M_2')+W)=\chi((x,M_1\oplus M_2)+W)$
  		\item $\chi(x,0)=\chi(+,M)$.
  	\end{itemize}

  	It is clear that when we take colimits $\bigcup_n\Phi$ and $\bigcup_n\Xi$ are inverse maps, then $\bigcup_n\Phi$ is a bijection.
  
  	\end{proof}
  	\begin{remark}
  		We endow to the set $\d_G(X,Y;B)$ with the topology that becomes the map $\bigcup_n\Phi_n$ a $G$-homeomorphisms.
  	
  	From now on we identify $\c_G(X,Y;B)$ and $\d_G(X,Y;B)$.\end{remark}
  	
  	Note that there is a canonical base point in  $\d_G(X,Y;B)$ associated to the zero map, we denote it by $\mathbf{0}$.

\subsection{Connective K-homology}
\begin{defin}
	\label{connective}

We define a covariant functor from the category of $G-CW$-pairs to the category of $\Z$-graded abelian groups. 

$$\k_n^G(X,Y,B)=\pi_{n+1}(\d_G(\Sigma X,\Sigma Y;B)/G,{\bf0}).$$In particular $$\k_n^G(X,B)=\pi_{n+1}(\d_G(\Sigma (X_+),+;B)/G,{\bf0}).$$

\end{defin}
We denote the elements in $\d_G(X,Y;M)/G$ by $$\overline{\sum_{i=1}^n(x_i,M_i)}.$$
Now we will prove that $ \k_*^G(-;B)$ satisfies the axioms for a $G$-homology theory in the sense of \cite{lu2002}.
\begin{teor}The functor $\k_*^G(-;B)$ is a $G$-homology theory.
 
\end{teor}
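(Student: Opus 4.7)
The plan is to verify the axioms defining an equivariant homology theory in the sense of \cite{lu2002}: functoriality, $G$-homotopy invariance, the long exact sequence of a pair, excision, and the disjoint union axiom.

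Functoriality and $G$-homotopy invariance are straightforward from the configuration model. A $G$-map of pairs $f:(X,Y)\to(X',Y')$ induces a continuous $G$-map $\d_G(\Sigma X,\Sigma Y;B)\to\d_G(\Sigma X',\Sigma Y';B)$ by $\sum_i(x_i,M_i)\mapsto\sum_i(\Sigma f(x_i),M_i)$; this preserves the base point $\mathbf{0}$ and the partial-monoid relations, descends to the orbit spaces, and hence induces a homomorphism on $\pi_{n+1}$. A $G$-homotopy $h:X\times I\to X'$ gives in the same way a homotopy through such maps of configuration spaces, so the induced map on $\k_*^G(-;B)$ only depends on the $G$-homotopy class.

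The heart of the proof is the long exact sequence of a pair. The plan is to show that for a $G$-cofibration $Y\hookrightarrow X$ the canonical projection
\[
p\colon\d_G(\Sigma X;B)/G\longrightarrow\d_G(\Sigma X,\Sigma Y;B)/G
\]
is a quasifibration with fiber $\d_G(\Sigma Y;B)/G$. Following the Dold-Thom strategy as adapted by Segal to connective K-theory, and the trivial-coefficient version in \cite{velasquez1}, one filters the base by the number of labeled points, verifies by a local section argument that $p$ is a fibration over each stratum, and invokes the McDuff-Segal criterion to promote this to a quasifibration globally. The long exact sequence of homotopy groups of this quasifibration delivers the connecting homomorphism $\partial\colon\k_n^G(X,Y;B)\to\k_{n-1}^G(Y;B)$ and the required exactness.

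For excision, a $G$-CW-triad $(X;X_0,X_1)$ yields a canonical $G$-homeomorphism
\[
\d_G(\Sigma X,\Sigma X_0;B)\;\cong\;\d_G(\Sigma X_1,\Sigma(X_0\cap X_1);B),
\]
obtained directly from the defining identifications $(x,{\bf0})=(\{Y\},M)$ and the fact that configurations supported in $X_0$ are collapsed to the base point. Taking $G$-orbits and $\pi_{n+1}$ gives excision. The disjoint union axiom reduces to the observation that a labeled configuration on $\Sigma(\coprod_\alpha X_\alpha)$ splits uniquely as an orthogonal sum of configurations on each $\Sigma X_\alpha$, giving a weak equivalence between $\d_G$ of the disjoint union and the appropriate colimit of the pieces, compatible with $G$-orbits.

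The main obstacle will be the quasifibration step with coefficients in $B$ and a proper, possibly infinite, $G$-action. Local triviality on each stratum must be built from the partial-monoid structure of $\Mod_BL$ (so that orthogonal direct sum provides the needed concatenation with the fiber) and from slice neighborhoods of proper $G$-CW-complexes; and one must check that passing to $G$-orbits before taking $\pi_*$ preserves the quasifibration property, which is where properness of the action becomes essential.
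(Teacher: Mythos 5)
Your proposal follows essentially the same route as the paper: functoriality and homotopy invariance read off from the compact-open topology, excision and the disjoint union axiom from the behaviour of $C_0(-)$ and the configuration model, and the long exact sequence obtained by proving that $p\colon\d_G(X;B)/G\to\d_G(X,Y;B)/G$ is a quasifibration via a Dold--Thom-style stratification and gluing criterion, exactly as in \cite{velasquez1}. The only cosmetic differences are that the paper filters by the total $B$-rank of the labels rather than by the number of points and invokes the Dold--Thom criteria directly rather than a McDuff--Segal-type statement.
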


\begin{proof}

\begin{enumerate}
 
 \item \textbf{Homotopy axiom}
 

Let 
$f_t:(X,Y)\rightarrow (X',Y')$ ($t\in[0,1]$) be $G$-homotopy, then the map $f_{t*}:\kcal_G(X,Y;B)\rightarrow \kcal_G(X',Y';B)$ is a $G$-homotopy (because the topology is the compact-open topology). Hence the functor $\k^G_*(-;B)$ is $G$-homotopy 
invariant.

\item \textbf{Long exact sequence axiom}

For a proper $G$-CW pair $(X,Y)$ we have an inclusion $$\kcal_G(Y;B)\rightarrow\kcal_G(X;B),$$ 
and a canonical projection  $$p_*:\kcal_G(X;B)\rightarrow \kcal_G(X,Y;B)$$ given by neglecting the points in $Y$.

To prove the long exact sequence axiom for $\k^G_*$ we will show that $$p_*:\kcal_G(X;B)/G\rightarrow\kcal_G(X,Y;B)/G$$is a quasifibration.

\begin{teor}The map 
 $$p_*:\kcal_G(X;B)/G\rightarrow \kcal_G(X,Y;B)/G$$ is a quasifibration.
\end{teor}
\begin{proof}
The proof is similar to given in \cite{velasquez1} and then we give only a sketch.
For this proof we need to recall  the following lemma.

  \begin{lema}[\cite{dt58}]\label{edold}
  A map $p : E\rightarrow B$ is a quasifibration if any one of the following conditions
  is satisfied:
    \begin{enumerate}
      \item The space $B$ can be decomposed as the union of open sets $V_1$ and $V_2$ such that each of
      the restrictions $p^{-1}(V_1)\rightarrow V_1$, $p^{-1}(V_2)\rightarrow V_2$, and $p^{-1}(V_1\cap V_2)\rightarrow V_1 \cap V_2$ 
      are quasifibrations.
      \item The space $B$ is the union of an increasing sequence of subspaces \\$B_1\subseteq B_2\subseteq\cdots$ with the property 
      that each compact set in $B$ lies in some $B_n$, and such that each restriction $p^{-1}(B_n)\rightarrow B_n$ is a 
      quasifibration.
      \item There is a deformation $\Gamma_t$ of $E$ into a subspace $E_0$, covering a deformation $\bar{\Gamma}_t$ of $B$ 
      into a subspace $B_0$, such that the restriction $E_0\rightarrow B_0$ is a quasifibration and 
       $\Gamma_1 :p^{-1}(b)\rightarrow p^{-1}(\bar{\Gamma}_1(b))$ is a weak 
      homotopy equivalence for each $b\in B$.
    \end{enumerate}
  \end{lema}
Note  that we have a filtration of $\kcal_G(X,Y;B)$ by closed $G$-spaces in the following way
\begin{align*}&\kcal_G^n(X;Y;B)=\\&\left\{\sum_{i=1}^{m}(x_i,M_i)\, \biggl| \,  \bigoplus_{i=1}^mM_i \text{ is contained in a free submodule of $B^\infty$ of rank $n$}\right\}\end{align*}

The idea is to proceed by induction on $n$, using property (b) in Lemma \ref{edold}.

 First we want to find a open set $\calu\subseteq\kcal^{n+1}_G(X,Y;B)/G$ containing $\kcal^n_G(X,Y;B)/G$ and such that $\kcal^n_G(X,Y;B)/G$ is a deformation retract of $\calu$ satisfying the condition (c) in Lemma \ref{edold}. In other words we have to find an open set $\calu$ such that we have a commutative diagram
 $$\xymatrix{\d^n_G(X;B)/G\ar[r]^p\ar[d]_i^{\simeq}&\d^n_G(X,Y;B)/G\ar[d]^\simeq_i\\p^{-1}(\calu)\ar[d]_i\ar[r]^{p}&\calu\ar[d]_i\\\d^{n+1}_G(X;B)/G\ar[r]^p&\d^{n+1}_G(X,Y;B)/G}$$
 where $i$ denotes the inclusion, such that $\calu$ satisfy condition (c) on Lemma \ref{edold}.
 
 Let $f_t:(X,Y)\to(X,Y)$ a $G$-homotopy such that $f_0=id_X$ and $N\subseteq f_1^{-1}(Y)$ is an open neighborhood of $Y$ in $X$. Let $\overline{\calu}\subseteq \d^{n+1}(X;B)/G$ be the orbit set of configurations with at least one point in $N$, let $\calu=p(\overline{\calu})$. Both sets are open. Consider the induced maps
 $$f_{t*}:\d^{n+1}_G(X;B)/G\to \d^{n+1}_G(X;B)/G \text{ and,}$$ $$\bar{f}_{t*}:\d^{n+1}_G(X,Y;B)/G\to \d^{n+1}_G(X,Y;B)/G,$$

The homotopy $f_{t*}$ is a weak deformation of $\overline{\calu}$ into $\d^n_G(X;B)/G$ covering the weak deformation $\bar{f}_{t*}$ of $\calu$ into $\d^n_G(X,Y;B)/G.$ To apply Lemma \ref{edold} we only need to verify that $$f_{1*}:p^{-1}(\mathfrak{b})\to p^{-1}(\overline{f}_{1*}(\mathfrak{b}))$$is a weak homotopy equivalence for every $\mathfrak{b}\in p(\overline{\calu})$.


Let $\frakb\in \calu$, then one can suppose that $\mathfrak{b}$ does not contain elements in $Y$ with $$\frakb=\overline{\sum_{i=1}^n(x_i,M_i)}\text{ and } \overline{f}_{1*}(\frakb)=\overline{\sum_{k=1}^l(f_1(x_{i_k}),M_{i_k})}\text{ with $l\leq n$,}$$where $\{x_{i_k}\}$ is the subset of $\{x_i\}$ whose elements are in $X-f_1(Y)$. Then the set $p^{-1}(\frakb)$ can be described as the set whose elements have the form
$$\frakb+\overline{\sum_{j=1}^m(y_j,M_j')}$$ where $ y_j\in Y$  and $M_j'$ are composable elements in $\Mod_B(\oplus M_i)^\perp$. Then we have a homeomorphism that collapses the part of a configuration contained in $X-Y$, given by
\begin{align*}
p^{-1}(\frakb)\xrightarrow{h_\frakb} &\d_G(Y;B,(\oplus M_i)^\perp)/G\\
\frakb+\overline{\sum_{j=	1}^{m}(y_j,M_j')}\mapsto&\overline{\sum_{j=	1}^{m}(y_j,M_j')}
\end{align*}

On the other hand the map $f_{1*}$ is defined as
\begin{align*}
f_{1*}:p^{-1}(\frakb)&\to p^{-1}(\overline{f}_{1*}(\frakb))\\
\frakb+\overline{\sum_j(y_j,M_j')}&\mapsto\overline{f}_{1*}(\frakb)+\overline{\sum_j(y_j,M_j')}
\end{align*}
Note that although $\frakb$ does not have elements in $Y$, the image $\bar{f}_{1*}(\frakb)$ could have elements in $Y$. Then the map

\begin{align*}
p^{-1}(\bar{f}_{1*}(\frakb))\xrightarrow{h_{\bar{f}_{1*}(\frakb)}} &\d_G(Y;B,(\oplus M_{i_k})^\perp)/G\\
\frakb+\overline{\sum_{j=	1}^{m}(y_j,M_j')}\mapsto&\overline{\sum_{j=	1}^{m}(y_j,M_j')}
\end{align*}
can be described as sending $\overline{f}_{1*}(\frakb)+\overline{\sum_j(y_j,M_j')}$ to $\frakb'+\overline{\sum_j(y_j,M_j')},$ where $\frakb'$ is the part of $\overline{f}_{1*}(\frakb)$ contained in $N-Y$.
We have the following commutative diagram
$$\xymatrix{p^{-1}(\frakb)\ar[rr]^{h_\frakb}\ar[d]^{f_{1*}}&&\d_G(Y;B,\oplus(M_i)^\perp)/G\ar[d]^\chi\\p^{-1}(\overline{f}_{1*}(\frakb))\ar[rr]^{h_{\overline{f}_{1*}(\frakb)}}&&\d_G(Y;B,\oplus (M_{i_k})^\perp)/G.}$$
The map $\chi$ can be described as sending $$\overline{\sum_j(y_j,M_j')}\mapsto \overline{\sum_j(y_j,M_j')}+\frakb'.$$ As $f_1$ is $G$-homotopic to the identity and one can deform $\frakb'$ to $\mathbf{0}$ using a continuous path, the map $\chi$ is a homotopy equivalence and then the same is true for $f_{1*}$. By part (c) in Lemma \ref{edold} we have $p:\calu\to p(\calu)$ is a quasifibration.

The second part consist to prove that $p\mid_{Q_{n+1}}$ and $p\mid_{Q_{n+1}\cap p^{-1}(\calu)}$ are quasifibrations, where $Q_{n+1}=p^{-1}(\d^{n+1}_G(X,A;G)/G-\d^{n}_G(X,A;G)/G)$, and then use part (a) in Lemma \ref{edold}. The argument is similar to the given in Thm. 3.15 in \cite{velasquez1}.

\end{proof}

\item\textbf{Excision}

It is a consequence of the isomorphism of $G$-C*-algebras between $C_0(X_1\cup_fX_2,X_2)$ and $C_0(X_1,Y_1)$ induced by the inclusion $$i:(X_1,Y_1)\rightarrow(X_1\cup_fX_2,X_2).$$

\item \textbf{Disjoint union axiom}

  We have a natural isomorphism of $G$-$C^*$-algebras$$C_0(\coprod_{i\in I}X_i)\cong\bigoplus_{i\in I} C_0(X_i),$$
  then we have a $G$-homeomorphism $$\d_G(\coprod_{i\in I}X_i,B)\cong \coprod_{i\in I}\d_G(X_i,B)$$
  taking homotopy groups on orbit spaces we have the desired isomorphism.

\end{enumerate}

\end{proof}
In order to relate the homology theory $\k_i^G(-;B)$ with $G$-equivariant K-homology groups with coefficients in $B$ we will use the machinery of equivariant KK-theory, let us recall some necessary notions  for our work, we follow the treatment in \cite{bl98}.

\begin{defin}\label{generalizedelliptic}
	Let $C$ and $B$ be $\Z/2$-graded $G$-C$^\ast$-algebras. The set
	of \textit{Kasparov $G$-modules} for $(C, B)$, that is the set of triples $(E,\phi,F)$ such that
	\begin{enumerate}
		\item $E$ is  a graded countably generated Hilbert $B$-module with a continuous $G$-action.
		\item $\phi:C\rightarrow \mathfrak{B}(E)$ is a $G$-equivariant graded *-homomorphism. 
		\item $F$ is a $G$-continuous
		operator in $\mathfrak{B}(E)$ of degree 1, 
		such that for every $c \in C$ and $g\in G$ 
		\begin{enumerate}
			\item $F\phi(c)-\phi(c)F$, \item $(F^2 - Id)\phi(c)$, \item $(F - F^*)\phi(c)$ and \item $(g\cdot F-F)\phi(c)$\end{enumerate}
		are all in $\mathfrak{K}(E)$. 
	\end{enumerate}  
\end{defin}
There is a very general homotopy relation defined over Kasparov $G$-modules (see for example Def. 17.2.2 in \cite{bl98}). We denote a homotopy class of a Kasparov $G$-module by $[E,\phi,F]$ and by $KK_G(C,B)$ to the set of equivalence classes of Kasparov $G$-modules for $(C,B)$ under the homotopy relation. 

We will define a natural transformation $\mathfrak{A}^*(-)$ from $\k_*^G(-;B)$  to the equivariant KK-theory groups $KK_*^G(C_0(-),B)$ such that 

$$\k^G_i(G/H,B)=[S^{i+1},\kcal_G(\Sigma(G/H_+),+;B)/G]\xrightarrow{\mathfrak{A}^n(G/H)}{KK}^i_G(C_0(G/H),B)$$
   
is an isomorphism for $i\geq0$ when $H$ is a finite subgroup of $G$. The crucial step is to assign to the $G$-orbit of a configuration over $\Sigma(X_+)$ a $G$-equivariant *-homomorphism $$C_0(\Sigma(X_+),+)\to \mathfrak{K}_B.$$This result is proved in the following lemmas that are inspired in Sections 2.2 and 2.3 in \cite{valette}.


\begin{lema}\label{fundamental}
  Let $\mathfrak{b}\in\kcal_G(\Sigma(X_+),+;B)/G$, then if $F$ is a representing  of the orbit $\mathfrak{b}$, we define a $*$-homomorphism 
  \begin{align*}\A(F):C_c(\Sigma(X_+),+)&\to \K_B\\
  f&\mapsto\sum_{g\in G}(g\cdot F)(f).\end{align*}
  Then
    \begin{enumerate}
    \item The sum $\sum_{g\in G}(g\cdot F)(f)$ converges in the norm topology.
    \item $\A(F)$ is continuous.
    \item $\A(F)$ only depends on the orbit $\mathfrak{b}$ and $\A(F)$ is $G$-equivariant.
    \end{enumerate}
  \begin{proof}
  Let $f\in C_c(\Sigma(X_+),+)$ with support $A\subseteq \Sigma(X_+)=(I\times X_+)/\sim$. If $F$ has eigenvalues given by characters
  $$+, (t_1,x_1),\ldots, (t_n,x_n)\in \Sigma(X_+),$$ then for every $g\in G$, $g\cdot F$ has eigenvalues $$+, (t_1,gx_1),\cdots,(t_n,gx_n)\in \Sigma(X_+).$$ As $X$ is $G$-proper, the set $\{gx_i\mid g\in G\}$ is discrete for every $i$, then $A\cap \{gx_i\mid g\in G\}$ is finite, it implies that the sum $\sum_{g\in G}(g\cdot F)(f)$ only has finite terms for each $f\in C_c(\Sigma (X_+,+))$, it implies $(1)$ and $(2)$, on the other hand statement $(3)$ is obvious.
  \end{proof}

  \end{lema}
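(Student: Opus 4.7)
My plan is to use properness of the $G$-action on $X$ to collapse the a priori infinite sum $\sum_{g\in G}(g\cdot F)(f)$ to a finite sum for each fixed $f\in C_c(\Sigma(X_+),+)$; once that is established, norm convergence, continuity and $G$-equivariance all become routine, and orbit-invariance is just a reindexing.

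First I would unpack $F$ via the identification $\c_G(X,Y;B)\cong\d_G(X,Y;B)$ proved above: write $F$ as a configuration $\sum_{i=1}^n(z_i,M_i)$ with $z_i=(t_i,x_i)\in\Sigma(X_+)$ and mutually orthogonal finitely generated projective $B$-submodules $M_i$, so that $F(f)=\sum_i f(z_i)P_{M_i}$ and $(g\cdot F)(f)=\sum_i f(t_i,gx_i)\bigl(gP_{M_i}g^{-1}\bigr)$, where $P_{M_i}$ denotes the projection of $\mathcal{H}_B$ onto $M_i$.

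For parts (1) and (2), let $A\subseteq\Sigma(X_+)$ be the compact support of $f$. Since $f$ vanishes at the basepoint, projecting $A$ through $(t,x)\mapsto x$ yields a compact set $K\subseteq X$. By properness of the $G$-action on $X$, the set $\{g\in G:gx_i\in K\}$ is finite for each $i$, so only finitely many pairs $(g,i)$ contribute to $\sum_{g\in G}(g\cdot F)(f)$. Thus this is a finite sum of bounded operators, converges in norm inside $\K_B$, and depends continuously on $f$: the same finite index set witnesses continuity throughout a compact-open neighborhood of $f$, since a slight enlargement of $K$ still meets only those finitely many $g$-translates of each $x_i$.

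For (3), if $F'=h\cdot F$ is another representative of $\frakb$, then $(g\cdot F')(f)=((gh)\cdot F)(f)$ and reindexing $g\mapsto gh^{-1}$ gives $\A(F')(f)=\A(F)(f)$, so $\A$ is well-defined on orbits. For $G$-equivariance $\A(F)(h\cdot f)=h\cdot\A(F)(f)\cdot h^{-1}$ one substitutes $g\mapsto hg$ in the sum and uses the conjugation structure of the $G$-action on $\c_G(X,Y;B)$. The main obstacle I anticipate is keeping careful track of the two simultaneous $G$-actions in (3), on characters and on modules, but no ingredient deeper than properness of the action on $X$ is required.
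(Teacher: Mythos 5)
Your proposal is correct and follows essentially the same route as the paper: both arguments reduce to the observation that, by properness of the $G$-action and compactness of the support of $f$, only finitely many terms of $\sum_{g\in G}(g\cdot F)(f)$ are nonzero, after which (1) and (2) are immediate and (3) is a reindexing. You merely spell out more explicitly the reindexing for orbit-invariance and equivariance, which the paper dismisses as obvious.
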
 
  
  As $\frak_B$ is complete there is a continuous extension of $\A(F)$ to $C_0(\Sigma(X_+),+)$, as $\A(F)$ only depends on $\frakb$ we can denote by $\A(\frakb)$ the extension of $\A(F)$ to $C_0(\Sigma (X_+),+)$.\\
  
  

Given a $G$-equivariant $*$-homomorphism $$\phi:C_0(\Sigma(X_+),+)\rightarrow\mathfrak{K}_B,$$ one can assign an element in $KK_G(C_0(\Sigma(X_+),+),B)$, namely $[\K_B,\phi,0]$. Then we have a map 
\begin{align*}\mathfrak{A}:\pi_0(\kcal_G(X_+,+;B)/G,{\bf0})&\rightarrow KK^0_G(C_0(\Sigma(X_+,+)),B)\\
[\mathfrak{b}]&\mapsto[\K_B,\A(\frakb),0)].\end{align*}
This map is well defined because homotopy of *-homomorphism is a special case of the homotopy relation of the Kasparov cycles. Moreover, this association is really a natural transformation $\mathfrak{A}$ from $\pi_0(\kcal_G(-;B)/G)$ to $KK^0_G(C_0(-),B)$. 
To extend this natural transformation to all $n\geq0$ we need 
the following form of the Bott periodicity theorem. For a proof consult \cite[Corol. 19.2.2]{bl98}.
\begin{teor}\label{bott}
  For any $G$-$C^*$-algebras $A$ and $B$ , we have natural isomorphisms$$KK^1_G(A,B)\xrightarrow{\beta} KK_G(A,SB)$$
  $$KK_G^1(SA,B)\xrightarrow{\alpha}KK_G(A,B)$$
   where the suspension of $B$ is $$SB:= \{f:S^1\rightarrow B\mid f \text{ is continuous and }f(1)=0\}.$$with the supremum norm.
  \end{teor}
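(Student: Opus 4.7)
The plan is to realize $\beta$ (and $\alpha$ dually) as the Kasparov product with a universal Bott element, to construct an inverse coming from a Dirac-type element, and to prove these are mutually inverse via a rotation homotopy. Throughout, the $G$-action is confined to $A$ and $B$ and acts trivially on the suspension coordinate, so the classical non-equivariant constructions adapt essentially verbatim.

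First I would use the standard identification $KK^1_G(A,B)\cong KK_G(A,B\hat{\otimes}C_1)$, where $C_1$ denotes the complex Clifford algebra on one odd generator; this identification is grading-theoretic and $G$-equivariant because $G$ acts trivially on $C_1$. Then $\beta$ is induced by the Kasparov product with a \emph{Bott element} $\mathbf{b}\in KK_G(C_1,S\C)$ represented by a standard cycle on $C_0(\R)$ built from Clifford multiplication and multiplication by an odd bounded function $u:\R\to[-1,1]$ with $u(t)\to\pm 1$ at infinity. A candidate inverse is the Kasparov product with a \emph{Dirac element} $\mathbf{d}\in KK_G(S\C,C_1)$, realized from the Dirac operator on $\R$ acting on $L^2(\R)\oplus L^2(\R)$ with its natural $\Z/2$-grading. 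Both $\mathbf{b}$ and $\mathbf{d}$ are manifestly $G$-equivariant because $G$ acts trivially on the $\R$-coordinate. The second isomorphism $\alpha$ is obtained symmetrically by Kasparov product with $\mathbf{d}$ on the left, and is formally equivalent to $\beta$ via associativity of the Kasparov product and the (graded) swap $A\hat{\otimes} B\cong B\hat{\otimes} A$.

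The hardest step, which I expect to be the main obstacle, is verifying the periodicity identity: that $\mathbf{b}\hat{\otimes}_{S\C}\mathbf{d}=\mathbf{1}_{C_1}$ and $\mathbf{d}\hat{\otimes}_{C_1}\mathbf{b}=\mathbf{1}_{S\C}$ in the appropriate $KK_G$-groups. This is Atiyah's rotation trick in the Kasparov setting: on $\R\times\R$ one writes an explicit homotopy of Kasparov cycles rotating the two coordinates, deforming the exterior product cycle into one supported near the origin, which is then identified with the identity class by a linear scaling homotopy. Equivariance is automatic since the rotation and scaling act only on the $\R$-coordinates, where $G$ acts trivially; consequently the non-equivariant proof carries over directly.

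Finally, naturality of $\beta$ and $\alpha$ in both variables follows formally from the functoriality and associativity of the Kasparov product with respect to equivariant *-homomorphisms, since the generators $\mathbf{b}$ and $\mathbf{d}$ do not depend on $A$ or $B$.
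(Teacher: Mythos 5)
The paper does not actually prove this theorem: it defers entirely to \cite[Corol.\ 19.2.2]{bl98}, and your outline --- the identification $KK^1_G(A,B)\cong KK_G(A,B\hat{\otimes}C_1)$, the Bott and Dirac elements, and Atiyah's rotation trick to show they are mutually inverse, with equivariance free because $G$ acts trivially on the $\R$-coordinates --- is precisely the standard argument underlying that reference, so this is the same approach. Your sketch is correct at the level of detail given; the only substantive work left is the computation of the two Kasparov products, which you rightly flag as the technical core.
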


Given a based continuous map $\mathfrak{l}:S^1\rightarrow\kcal_G(\Sigma(X_+),+;B)/G$, and
 $f\in C_0(\Sigma(X_+,+))$, we can induce a continuous map 
 \begin{align*}
 \A(\mathfrak{l})(-)(f):S^1\rightarrow&\K_B\\
 \theta\mapsto& \A(\mathfrak{l}(\theta))(f),
 \end{align*} it is an element of $S(\K_B)$, that means that we have a map
 
 $$\Map_0(S^1,\kcal_G(\Sigma(X_+),+;B)/G)\rightarrow \Hom^*(C_0(\Sigma(X_+),+),S(\K_B))^G,$$
 
 and to every element in $\phi\in \Hom^*(C_0(X),S(\K_B)^G$ we can associate the Kasparov module $(S(\K_B),\phi,0)$. Taking homotopy classes we have a  
homomorphism 
$$\k_0^G(X,B)=\pi_1(\kcal_G(\Sigma(X_+),+;B),{\bf0})\rightarrow KK_G(C_0(\Sigma(X_+),+),SB)\cong KK_G(C_0(X),B),$$
where the last isomorphism is given by Theorem \ref{bott} identifying $C_0(\Sigma(X_+),+)$ with $S(C_0(X))$. The transformation
$\mathfrak{A}^0(X)$ is defined as the composition of the above maps.

For every $n\geq1$ the transformation  $\mathfrak{A}^n(A)$ is defined in an analogue way using Theorem \ref{bott} repeatedly.

\begin{remark}\label{susp}
	As $\k_*^G$ is a $G$-homology theory it satisfies the suspension axiom, moreover the same argument proves that we have a canonical identification 
	$$\k_0^G(pt;B)\cong \operatorname{Groth}(\pi_0(\c_G(pt;B)/G)),$$where $\operatorname{Groth}$ denotes the Grothendieck group associated to the monoid $\pi_{0}(\c_G(pt;B)/G)$ with direct sum.
\end{remark}

\begin{teor}\label{s1t}Let $H$ be a finite group.
 The homomorphism  $$\mathfrak{A}^n(pt):\k_n^H(pt,B)\rightarrow KK^n_H(\C,B)$$is an isomorphism for every $n\geq0$.
\end{teor}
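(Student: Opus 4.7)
The plan is to dispatch $n=0$ directly and then bootstrap to $n\geq1$ using Bott periodicity on both sides.

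For the case $n=0$, by Remark \ref{susp} we have $\k_0^H(pt;B)\cong\operatorname{Groth}(\pi_0(\c_H(pt;B)/H))$. Since $C_0(pt)=\C$, a strongly diagonalizable $*$-homomorphism $F:\C\to M_n(B)$ is determined by the projection $p=F(1)\in M_n(B)$ together with a diagonalizing basis, so the space $\c_H(pt;B)$ retracts onto the space of projections in $\mathfrak{K}_B$; the $H$-action reduces to the one induced diagonally from $B$, and $\pi_0$ of the orbit space is the abelian monoid of $H$-equivariant Murray--von Neumann classes of such projections. Its Grothendieck completion is the equivariant K-theory group $K_0^H(B)$, which by the Green--Julg theorem for compact (hence finite) groups is canonically isomorphic to $K_0(B\rtimes H)\cong KK_H^0(\C,B)$. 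A direct inspection shows that $\mathfrak{A}^0(pt)$ implements exactly this chain of isomorphisms: the orbit of a configuration with underlying projection $p$ is sent to the Kasparov cycle $(\mathfrak{K}_B,\A(p),0)$, and this is precisely the Green--Julg class of $p$ (after the Bott identification already incorporated into $\mathfrak{A}^0$).

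For $n\geq1$, the definition of $\mathfrak{A}^n(pt)$ factors through iterated applications of the Bott isomorphism of Theorem \ref{bott} on the KK-side. On the configuration-space side, the fact that $\k_*^H$ is a $G$-homology theory provides a matching suspension isomorphism identifying $\pi_{n+1}$ of $\d_H(S^1,+;B)/H$ with $\pi_1$ of a space associated to a higher sphere, via the standard adjunction between loops and suspensions combined with the excision/long exact sequence established above. The compatibility of these two structures with $\mathfrak{A}$ is essentially built into its construction, so an induction on $n$ reduces the claim to the base case $n=0$ handled above.

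The main obstacle is the compatibility check between Bott periodicity on the KK-side and the suspension isomorphism of the connective theory on the configuration side. Its verification is in the spirit of Segal's classical identification of labelled configuration spaces with connective K-theory spectra, with extra care needed for the $G$-equivariance and the $B$-coefficients; once that compatibility is in place, the remainder is a diagram chase relying on the explicit description of $\mathfrak{A}^0$ through Green--Julg given in the first step.
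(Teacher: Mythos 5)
Your overall architecture (settle $n=0$, then bootstrap with Bott periodicity) matches the paper's, but the inductive step contains a genuine gap and the base case takes a different route.

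For $n\geq 1$ the paper does \emph{not} suspend the space variable: it uses Bott periodicity in the \emph{coefficient algebra}, producing an isomorphism $\pi_{n+1}(\kcal_H(S^1,1;B)/H,\mathbf{0})\cong\k_0^H(pt,S^n(B))$ (by the argument of Corollary 19.2.2 of \cite{bl98}) and then applying the case $n=0$ with coefficients $S^n(B)$. Your proposed reduction --- identifying $\pi_{n+1}$ of $\kcal_H(S^1,+;B)/H$ with $\pi_1$ of a configuration space on a higher sphere via the suspension isomorphism of the homology theory --- cannot work: the suspension isomorphism $\widetilde{\k}_{j}^H(S^m\wedge X;B)\cong\widetilde{\k}_{j-m}^H(X;B)$ shifts the degree and the space \emph{together}, so rewriting $\k_n^H(pt,B)$ as a degree-zero group of a sphere would force it to equal $\widetilde{\k}_0^H(S^n;B)\cong\k_{-n}^H(pt,B)$, which vanishes for a connective theory while $KK_H^n(\C,B)$ does not. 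Any legitimate reduction to the base case must move the suspension into $B$, and proving that this coefficient-Bott map is an isomorphism on the configuration-space side is precisely the non-trivial input; it is not ``built into the construction'' of $\mathfrak{A}^n$.

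For $n=0$ your route through Green--Julg is a genuinely different (and in principle admissible) path: the paper instead reduces an arbitrary Kasparov $H$-module $(E,\phi,F)$ to the normal form $(\ker(\widetilde{F})\oplus\ker(\widetilde{F}^*),1,0)$ via Prop.\ 3.27 of \cite{higsonprimer} and reads off an explicit inverse $[\alpha]\mapsto[\ker(\widetilde{F})]-[\ker(\widetilde{F}^*)]$ in $\operatorname{Groth}(\pi_0(\c_H(pt;B)/H))$, which buys a concrete inverse without invoking Green--Julg. However, your key assertion --- that $\pi_0(\c_H(pt;B)/H)$ is the monoid of $H$-equivariant Murray--von Neumann classes, so that its Grothendieck group is $K_0^H(B)$ --- is exactly the content that needs proof: $\pi_0$ of an orbit space consists of $H$-orbits of path components, not a priori of classes of $H$-invariant projections, and both this identification and the claim that $\mathfrak{A}^0(pt)$ realizes the Green--Julg isomorphism through the suspension built into its definition are left unjustified.
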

\begin{proof}

The argument is similar to given in \cite{velasquez1}, we will give here for completeness. Let $\alpha=(E,\phi,F)$ be a  $(\C,B)$-Kasparov $H$-module, for $n=0$ we will prove that $\alpha$ is homotopic to a module with the form $(M,\mathbf{1},0)$ where $M$ is a projective submodule of $B^m$ for some $m\geq1$ with $\mathbf{1}:\C\to \mathfrak{B}(M)$ the canonical  inclusion. The Hilbert $B$-module $E$ is $\Z/2\Z$-graded and the map $\phi$ is a projection of degree 0, which means that $E=E_0\oplus E_1$ and $\phi(1)=diag(P,Q)$ for projections $P$ and $Q$. The operator $F$ has the form $F=\begin{pmatrix}
0&S\\T&0
\end{pmatrix}$. The Kasparov module is $KK$- equivalent to  $\beta=\left(\Im(P)\oplus \Im(Q),1,\widetilde{F}=\begin{pmatrix}
0&\widetilde{S}\\\widetilde{T}&0
\end{pmatrix}\right)$ (see \cite[Ex. 17.3.4]{bl98}). By Prop. 3.27 in \cite{higsonprimer} we can suppose that $\beta$ is homotopic to $(\ker(\widetilde{F})\oplus\ker(\widetilde{F}^*),1,0)$, where $\ker(\widetilde{F})\oplus\ker(\widetilde{F}^*)$ is a finitely generated projective $H$-equivariant Hilbert $B$-submodule of $B^n$. The map 

$$[\alpha]\mapsto[\ker(\widetilde{F})]-[\ker(\widetilde{F}^*)] $$
gives us an inverse for $\A^0(pt)$ (here we are using the identification on Remark \ref{susp}).

Note that we have a canonical identification between $\Map_0(S^n,\c_G(X,B)/G)$ and $\c_G(X,S^nB)/G$ given by
 \begin{align*}
\Map_0(S^n,\c_G(X,B)/G)&\to \c_G(X,S^nB)/G\\
f&\mapsto (\xi\mapsto (\theta\mapsto f(\theta)(\xi))
 \end{align*}For $f\in \Map_0(S^n,\c_G(X,B)/G)$, $\xi\in C_0(X)$ and $\theta\in S^n$. Then we have an isomorphism 
 $$Bott:\k_n^H(pt,B)\to \k_0^H(pt,S^nB)$$ such that 
the following diagram is commutative
 $$\xymatrixcolsep{5pc}\xymatrix{
 \k_n^H(pt,B)\ar[r]^-{Bott}\ar[rd]_{\A^n(pt)}&\k_0^H(pt,S^nB)\ar[d]^{\A^0(pt)}\\
 &KK_H(\C,S^nB)}$$
As $\A^0(pt)$ is an isomorphism and $Bott$ is an isomorphism then $\A^n(pt)$ is an isomorphism also.
\end{proof}
In a similar way as in Thm. 5.5 in \cite{velasquez1} it can be proved that $\k_*^?(-,B)$ has an induction structure in the sense of \cite{lu2002}, moreover using Theorem \ref{s1t} and cellular induction it can be proved the following result.
\begin{teor}\label{main}
	The functor $\k_*^G(-,B)$ is naturally equivalent to the $G$-equivariant connective homology theory associated to the functor $KK_G(C_0(-),B)$ over the category of proper $G$-CW pairs. 
\end{teor}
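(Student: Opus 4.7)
The plan is to invoke the standard comparison principle for $G$-equivariant homology theories with induction structure on the family $\calfin$ of finite subgroups: a natural transformation between two such theories that restricts to an isomorphism on every orbit $G/H$ with $H\in\calfin$ is automatically an isomorphism on the whole category of proper $G$-CW pairs. The target here is the connective cover of $KK^G_*(C_0(-),B)$, obtained by truncating to degrees $\geq 0$.

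First, I would assemble the maps $\A^n(X)$ constructed above into a morphism of $G$-equivariant homology theories from $\k_*^G(-,B)$ to the connective cover of $KK^G_*(C_0(-),B)$. Naturality in the space variable is immediate from the definition of $\A$ via summation over the $G$-orbit of eigencharacters. Compatibility with the boundary operator of the long exact sequence of a pair reduces to the observation that in both theories the connecting map is induced geometrically by the same operation: on the $\k^G$ side by the quasifibration $p_*\colon\kcal_G(X;B)/G\to\kcal_G(X,Y;B)/G$ built in the previous subsection, and on the $KK$ side by the Puppe sequence attached to $C_0(X,Y)\hookrightarrow C_0(X)$; both boil down, at the level of representing $*$-homomorphisms, to restricting the domain of definition.

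Second, as the author indicates just before the statement (and in analogy with Thm.~5.5 in \cite{velasquez1}), the theory $\k_*^?(-,B)$ carries an induction structure, which on representatives amounts to extending a character of $C_0(X)$ along an inclusion $H\hookrightarrow G$. Since the induction structure on $KK^?(C_0(-),B)$ is given by the same operation, $\A^*$ is compatible with both induction structures. Combined with Theorem~\ref{s1t}, this yields an isomorphism $\A^n(G/H,B)\colon\k_n^G(G/H,B)\to KK_n^G(C_0(G/H),B)$ for every finite subgroup $H\leq G$ and every $n\geq 0$.

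Finally, a cellular induction closes the argument: for a finite-dimensional proper $G$-CW pair $(X,Y)$, I would induct on the skeleta via the five lemma applied to the long exact sequences of the pairs $(X^{(n)},X^{(n-1)})$, with excision, the suspension axiom, and the disjoint union axiom collapsing the relative groups to a coproduct of $\k$- and $KK$-groups of orbits $G/H$ already handled above; the passage from finite-dimensional subcomplexes to arbitrary proper $G$-CW pairs is then handled by the disjoint union axiom together with compatibility of both theories with filtered colimits along closed inclusions. The hard part will be the careful verification that $\A^*$ respects the boundary operator and the induction structure; once these naturality statements are established, the remainder of the proof is the classical five-lemma and skeletal induction argument for comparing equivariant homology theories.
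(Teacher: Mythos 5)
Your proposal follows essentially the same route as the paper, which only sketches this argument: equip $\k_*^?(-,B)$ with an induction structure (as in Thm.~5.5 of \cite{velasquez1}), use Theorem~\ref{s1t} to get the isomorphism on orbits $G/H$ with $H$ finite, and conclude by cellular induction via the standard comparison principle for equivariant homology theories in the sense of \cite{lu2002}. Your write-up simply makes explicit the naturality and boundary-compatibility checks that the paper leaves implicit.
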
 
\begin{comment}
contenidos...
\subsection{Twisted K-homology}\label{twisted}

	In this model we can describe twisted K-homology, here we follow \cite{twisted} and \cite{bacave}. Let $P\to X$ be a projective $G$-equivariant unitary bundle. The space of \emph{graded} Fredholm operators $\Fred^{(0)}(H)$ is endowed with a continuous right action of the group $P\mathcal{U}(H)$ (where $H$ is a separable Hilbert space), therefore we can take the associated bundle over $X$
	$$\Fred^{(0)}(P) := P \times_{P\calu(H)} \Fred^ {(0)}(H),$$
	and with the induced $G$-action given by
	$$g \cdot [(\lambda, A))] := [(g \lambda,A)]$$
	for $g$ in $G$, $\lambda$ in $P$ and $A$ in $\Fred^{(0)}(H)$.
	
	Denote by $$\Gamma(X; \Fred^{(0)}(P))$$ the space of sections of the
	bundle $\Fred^{(0)}(P) \to X$ and choose as base point in this space the
	section $s$ which chooses the base point $\widehat{I}$ on the fibers. 
	
	\begin{defin} \label{definition K-theory of X,P}
		Let $X$ be a connected proper $G$-CW-complex and $P$ be a projective unitary
		$G$-equivariant bundle over $X$. The Twisted $G$-equivariant
				K-theory groups of $X$ twisted by $P$ are defined as the  homotopy  groups  of  the  $G$-equivariant  sections
		$$K^{-i}_G(X;P) := \pi_i \left( \Gamma(X;\Fred^{(0)}(P))^{G}, s \right)$$
		where the base point $s= \widehat{I}$ is the section previously constructed.

	\end{defin}
	
	Let $\alpha\in H^2(G,S^1)$ be a twisting on the group $G$ and let $\pi:X\to\{pt\}$ the projection. 	The twisting $\alpha$ can be represented as the isomorphism class of a $S^1$-central extension $G_\alpha$ of $G$. Let $C_r^*(G,\alpha)$ the reduced group C*-algebra associated to $(G,\alpha)$, it is the subalgebra of elements in $C_r^*(G,\alpha)$ where $S^1$ acts by multiplication.
	
	Take $P_\alpha^X$ a  projective unitary $G$-equivariant bundle over $X$ representing $\pi^*\alpha\in H^3(X\times_GEG,\Z)$. 
	Prop. 6.11 in \cite{tu-xu} proves that the twisted K-theory groups $K^i(X;P_\alpha^X)$ are isomorphic to $KK^i_G(C_0(X), C_r^*(G,\alpha))$. Then applying Theorem \ref{main} to the C*-algebra $C_r^*(G,\alpha)$, we obtain
\begin{prop}[Equivariant connective twisted K-homology groups]Let $X$ be a proper $G$-CW-complex and let $\alpha\in H^2(G,S^1)$ be a twisting. The $G$-equivariant connective twisted K-homology groups $k_i^G(X;P_\alpha^X)$ are naturally isomorphic to $\k_i(X,C_r^*(G,\alpha))$.
	\end{prop}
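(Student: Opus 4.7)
The plan is to invoke the comparison principle for equivariant homology theories from \cite{lu2002}: two $G$-homology theories equipped with compatible induction structures, connected by a natural transformation that is an isomorphism on all proper homogeneous spaces $G/H$ with $H$ finite, must be naturally equivalent on the whole category of proper $G$-CW pairs. Both sides of the claim are $G$-homology theories: the left-hand side $\k_*^G(-,B)$ by the previous theorem, and the right-hand side by standard stabilization of $KK_G(C_0(-),B)$ into a connective theory. The transformations $\mathfrak{A}^n$ constructed above assemble, after passing to the connective covers, into a natural transformation of $G$-homology theories.

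The first step in the argument is to endow $\k_*^G(-,B)$ with an induction structure. Given a group homomorphism $\alpha\colon H\to G$ with $\ker(\alpha)$ acting freely on the $H$-space in question, and a proper $H$-CW pair $(X,Y)$, one sends the orbit of a configuration $\sum_i(x_i,M_i)$ in $\d_H(\Sigma X,\Sigma Y;\alpha^*B)$ to the corresponding configuration on $G\times_\alpha X$ with the same labels regarded via the induced $B$-module structure. This is formally parallel to Thm.~5.5 in \cite{velasquez1}; the main points to verify are that the map is well-defined on orbits, functorial in $\alpha$, and a homeomorphism in the relevant cases, which follow because induction of configurations is an elementary operation that commutes with orbit formation.

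The second step is to check that $\mathfrak{A}^*$ intertwines the induction structures on both sides. Since $\mathfrak{A}(\mathfrak{b})$ is defined by averaging $f\mapsto\sum_{g\in G}(g\cdot F)(f)$, the construction converts the geometric induction of configurations into the usual induction of $G$-equivariant $*$-homomorphisms into $\mathfrak{K}_B$, which in turn matches the induction structure on $KK_*^G(C_0(-),B)$ (and hence on its connective cover) as described in \cite{bl98}. This is a bookkeeping verification rather than a substantive argument.

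With these structural properties established, Theorem \ref{s1t} provides the base case: $\mathfrak{A}^n(G/H)$ is an isomorphism for every finite $H\leq G$ and every $n\geq 0$, since $G/H$-configurations can be identified with $H$-equivariant configurations on a point via the induction structure. A cellular induction over the skeleta of a proper $G$-CW pair, using the long exact sequence, excision, disjoint union, and the induction structure, then upgrades this orbit-wise isomorphism to a natural equivalence on all proper $G$-CW pairs. The main obstacle in the argument is the verification of the induction structure and its compatibility with $\mathfrak{A}^*$: once that bookkeeping is in place, the Lück comparison machinery delivers the conclusion essentially formally.
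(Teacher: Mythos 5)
There is a genuine gap: your argument never engages with the twisting. Everything you write --- the induction structure on $\k_*^G(-,B)$, the compatibility of $\mathfrak{A}^*$ with induction, the orbit-wise isomorphism supplied by Theorem \ref{s1t}, and the cellular induction via L\"uck's comparison principle --- amounts to a proof of Theorem \ref{main}, that is, of the equivalence between $\k_*^G(-,B)$ and the connective theory associated to $KK_G(C_0(-),B)$ for an arbitrary coefficient algebra $B$. The Proposition at hand asserts something different: that the groups $k_i^G(X;P_\alpha^X)$, which are defined geometrically via sections of the bundle $\Fred^{(0)}(P_\alpha^X)$ attached to a projective unitary $G$-equivariant bundle representing the twisting, agree with the configuration-space groups with coefficients in the twisted group algebra $C_r^*(G,\alpha)$. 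Your proposal never mentions $\alpha$, $P_\alpha^X$, or $C_r^*(G,\alpha)$, so as written it cannot establish this statement.

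The missing idea is the bridge between the two models of the twisted theory: one needs the identification $K^i_G(X;P_\alpha^X)\cong KK^i_G(C_0(X),C_r^*(G,\alpha))$, which the paper imports from Prop.~6.11 of \cite{tu-xu}. Once that is in hand, the Proposition follows at once by specializing Theorem \ref{main} to $B=C_r^*(G,\alpha)$; no fresh run of the L\"uck comparison machinery is required, since Theorem \ref{main} is already available and your paragraphs only reconstruct its proof. If you wanted to avoid citing Tu--Xu and instead run the comparison argument directly against the Fredholm-section model, you would still have to compute that model on proper orbits $G/H$ and match it with $\k_*^H(pt,C_r^*(G,\alpha))$ (equivalently, with $\alpha$-twisted representations of the finite isotropy groups), and that computation is exactly the content your proposal omits.
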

	
\subsection{Recovering K-homology}

Now we know that $\k_G^*$ is a homology theory is represented by the connective cover ${\bf k_B^G}$ of the proper $G$-spectrum associated to equivariant K-theory (with coefficients in $B$) as is defined for example in Section 2 in \cite{davisluck}. 
 


Using the Bott periodicity is possible to define a natural transformation $$\beta:k_i^G(-,B)\to k_{i+2}^{G}(-,B)$$ (in this case it is not an equivalence). With this $\beta$ it is possible to recover the (non-connective) equivariant K-homology from its connective version.

\begin{prop}\label{recover}For every proper $G$-CW-complex $X$ there is a natural isomorphism
$$\varinjlim_n k^G_{i+2n}(X,B)\cong KK^G_i(C_0(X),B).$$
The direct limit is taken over the maps $\beta$ defined above.
\end{prop}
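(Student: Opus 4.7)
The plan is to combine Theorem \ref{main} with Bott periodicity of equivariant $KK$-theory, proceeding in three steps.

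First, I would verify that both sides of the purported isomorphism are $G$-equivariant homology theories on proper $G$-CW-complexes. The left-hand side is the filtered colimit of the homology theories $k_{i+2n}^G(-,B)$ along the natural transformations $\beta$; since filtered colimits of abelian groups are exact and commute with countable coproducts, this colimit is itself a $G$-equivariant homology theory. The right-hand side $KK_i^G(C_0(-),B)$ is a $G$-equivariant homology theory on proper $G$-CW-complexes by standard results in equivariant $KK$-theory.

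Second, for each $n$ with $i+2n\geq 0$, Theorem \ref{main} gives a natural transformation $\mathfrak{A}^{i+2n}\colon k_{i+2n}^G(-,B)\to KK_{i+2n}^G(C_0(-),B)$, which composed with the $KK$-Bott isomorphism $\beta_{KK}^{\,n}\colon KK_{i+2n}^G(C_0(-),B)\xrightarrow{\cong} KK_i^G(C_0(-),B)$ yields a map $\Psi_n\colon k_{i+2n}^G(-,B)\to KK_i^G(C_0(-),B)$. I would then check that the $\Psi_n$ assemble into a natural transformation on the colimit, i.e.\ that $\Psi_{n+1}\circ\beta=\Psi_n$. This reduces to the statement that the Bott map $\beta$ on the connective side is compatible with $\beta_{KK}$ under $\mathfrak{A}^\ast$. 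Since $\beta$ on $k_\ast^G$ is itself defined by appealing to Bott periodicity at the level of configuration-space cycles, the compatibility can be checked on a point and is transferred to proper orbits via the induction structure.

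Third, I would show that the induced map $\varinjlim_n k_{i+2n}^G(-,B)\to KK_i^G(C_0(-),B)$ is an isomorphism on every proper orbit $G/H$ with $H$ finite. By Theorem \ref{s1t}, after identifying $KK_G^\ast(C_0(G/H),B)$ with $KK_H^\ast(\C,B)$, the map $\Psi_n$ at $G/H$ is already an isomorphism for every $n$ with $i+2n\geq 0$, so the colimit is automatically an isomorphism. The standard comparison theorem for $G$-equivariant homology theories with induction structure in the sense of \cite{lu2002}, via cellular induction and Mayer--Vietoris, then upgrades this pointwise isomorphism to an isomorphism on all proper $G$-CW-complexes.

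The main obstacle will be the compatibility check in the second step. The Bott map $\beta$ on $k_\ast^G$ is defined through the same periodicity that enters Theorem \ref{bott}, but showing rigorously that these two Bott maps coincide under $\mathfrak{A}^\ast$ requires careful bookkeeping of the successive suspension isomorphisms and of the chosen representatives of configurations versus Kasparov cycles; once this naturality square commutes at a point, the rest of the argument is formal.
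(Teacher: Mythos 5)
Your proposal is correct and follows essentially the same route as the paper: show the colimit is a $G$-homology theory, assemble the $\mathfrak{A}^{i+2n}$ with $KK$-Bott periodicity into a natural transformation on the colimit, verify it is an isomorphism on proper orbits via Theorem \ref{s1t}, and conclude by the standard comparison theorem. The compatibility of the two Bott maps that you rightly flag as the delicate point is exactly what the paper's (much terser) proof takes for granted when it writes $\varinjlim_n\mathfrak{A}^{*+2n}$ as a transformation into $KK_G^{*+2n}(C_0(-),B)\cong KK_G^*(C_0(-),B)$.
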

\begin{proof}

	We already know that ${\bf k_B^G}$ is a proper $G$-spectrum representing equivariant connective K-homology, as the periodicity maps $\beta$ commutes with the structure maps of ${\bf k_B^G}$ then $\varinjlim_n{\bf k_B^G}$ is a proper $G$-spectrum, then $\varinjlim_n k^G_{*+2n}(-,B)$ is a $G$-homology theory, moreover $\varinjlim_n\A^{*+2n}$ is a natural transformation from
	$\varinjlim_n k^G_{*+2n}(-,B)\text{ to }KK_G^{*+2n}(C_0(-),B)\cong KK_G^*(C_0(-),B),$ such that is an isomorphism on proper orbits $G/H$, then the natural transformation is an equivalence.
	\end{proof}

\section{The analytic assembly map}

In this section we will describe a version of the assembly map for the Baum-Connes conjecture with coefficients, in terms of configuration spaces.

First we briefly recall the descent morphism of Kasparov. For details the reader can consult \cite[Lemma 3.9]{ka88}.

Let $(E,\phi,F)$ be a $G$-equivariant $(A,B)$-Kasparov module. We can consider $C_c(G,E)$ as a pre-Hilbert $B\rtimes_rG$-module.

The operator norm closure of $C_c(G,E)$ as a pre-Hilbert $B\rtimes_rG$-module is denoted by $E\rtimes_rG$. It is a Hilbert $B\rtimes_rG$-module.

On the other hand, the natural $\Z/2\Z$-graded *-homomorphism $$\phi_*:C_c(G,A)\to C_c(G,E)$$ can be extended to a $\Z/2\Z$-graded *-homomorphism$$\widetilde{\phi}:A\rtimes_rG\to\mathfrak{B}(E\rtimes_rG).$$

Finally we define $\widetilde{F}\in \mathfrak{B}(E\rtimes_rG)$ by $$\widetilde{F}(\alpha)(g)=F(\alpha)(g)\text{ for }\alpha\in C_c(G,E).$$
Let us denote by $$j_G([E,\phi,F])=[E\rtimes_rG,\widetilde{\phi},\widetilde{F}].$$
\begin{lema}
	For any $G$-C*-algebras $A$ and $B$ there is a functorial morphism 
	$$j_G:KK_G^*(A,B)\to KK^*(A\rtimes_rG,B\rtimes_rG).$$
	The map $j_G$ is called the descent morphism.
\end{lema}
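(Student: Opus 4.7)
The statement in question is Kasparov's classical descent lemma, and the construction of the candidate module $(E\rtimes_r G,\widetilde{\phi},\widetilde{F})$ has already been spelled out in the paragraph preceding the lemma. The plan is therefore to verify three things: (i) the triple just constructed satisfies the axioms of Definition \ref{generalizedelliptic} (now for the trivial group acting on the crossed products), (ii) the assignment respects the homotopy relation so that it descends to $KK$-classes, and (iii) it is functorial in both variables.

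For step (i) the only non-trivial checks are the compactness conditions for $a\in A\rtimes_r G$. It suffices to verify them on the dense subalgebra $C_c(G,A)$, and in fact on elementary tensors $a\otimes g$. One shows that the natural map $C_c(G,\mathfrak{K}(E))\to \mathfrak{B}(E\rtimes_r G)$ lands in $\mathfrak{K}(E\rtimes_r G)$, so it is enough to prove that each of the expressions $[\widetilde{F},\widetilde{\phi}(a\otimes g)]$, $(\widetilde{F}^2-1)\widetilde{\phi}(a\otimes g)$ and $(\widetilde{F}-\widetilde{F}^*)\widetilde{\phi}(a\otimes g)$, when unpacked on $\alpha\in C_c(G,E)$, can be written as pointwise applications of an element of $C_c(G,\mathfrak{K}(E))$. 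Here the hypothesis (d) of Definition \ref{generalizedelliptic}, namely that $(g\cdot F-F)\phi(c)\in\mathfrak{K}(E)$ depends $G$-continuously on $g$, is what converts the ``twisted commutator'' arising from the crossed product into a compact operator: the expression $\widetilde{F}\widetilde{\phi}(a\otimes g)-\widetilde{\phi}(a\otimes g)\widetilde{F}$ produces a factor $F-g\cdot F$ that lies in $\mathfrak{K}(E)$ and varies continuously with $g$, hence defines an element of $C_c(G,\mathfrak{K}(E))$ when $a\in C_c(G,A)$ has compact support. The same mechanism handles the self-adjointness and idempotent defects. The $G$-invariance of $\widetilde{F}$ relative to the trivial action of the trivial group is automatic, so condition (d) is vacuous on the target side.

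For step (ii), a homotopy between two $G$-equivariant Kasparov $(A,B)$-modules is a $G$-equivariant Kasparov $(A,IB)$-module, where $IB=C([0,1],B)$ carries the induced $G$-action. Since $IB\rtimes_r G\cong I(B\rtimes_r G)$ naturally, applying the descent construction to such a homotopy yields a homotopy between the descended modules, so $j_G$ is well defined on equivalence classes. Functoriality in $A$ follows because a $G$-equivariant $*$-homomorphism $A'\to A$ induces a $*$-homomorphism $A'\rtimes_r G\to A\rtimes_r G$ compatible with the construction; functoriality in $B$ is analogous, using that a $G$-equivariant $*$-homomorphism $B\to B'$ induces both a map on crossed products and a compatible map on the Hilbert module side $E\rtimes_r G\to (E\otimes_B B')\rtimes_r G$. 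The $\Z/2\Z$-grading, the $\mathbb{Z}$-grading on $KK$, and additivity under direct sums are all preserved by direct inspection.

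The main obstacle is the compactness check in step (i): one must carefully identify, inside $\mathfrak{B}(E\rtimes_r G)$, which operators lie in $\mathfrak{K}(E\rtimes_r G)$, and show that the image of $C_c(G,\mathfrak{K}(E))$ under the natural representation consists of compact operators. This is where the $G$-continuity hypothesis and the properties of the reduced crossed product (as opposed to the full one, which would also work) intervene. Once this technical identification is established, the verification reduces to routine manipulations with $C_c(G,-)$-valued functions, which is why a sketch rather than a full computation is appropriate here.
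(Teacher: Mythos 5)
Your proposal is correct and follows exactly the route the paper intends: the paper only recalls the construction of $(E\rtimes_rG,\widetilde{\phi},\widetilde{F})$ and delegates the verification to Kasparov's original Lemma 3.9, and your sketch supplies precisely those standard verifications, correctly identifying the key point that condition (d) of Definition \ref{generalizedelliptic} is what makes the twisted commutators land in $\mathfrak{K}(E\rtimes_rG)\cong\mathfrak{K}(E)\rtimes_rG$. No discrepancy with the paper's (outsourced) argument.
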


	As $X$ is proper and $G$-compact there is a non-negative $h\in C_c(X)$ such that $\sum_{g\in G}h(g^{-1}x)=1$ for all $x\in X$. 

 Define $p\in C_c(G,C_c(X))$ by

$$p(g,x)=\sqrt{h(x)h(g^{-1}x)},$$

 $p$ is a projection in $C_c(G,C_c(X))$ and hence in $C_0(X)\rtimes_rG$. Consider the homomorphism \begin{align*}\theta:\C&\rightarrow C_0(X)\rtimes_rG\\
 \lambda&\to\lambda p,
 \end{align*} it induces a morphism 
 $$\theta^*:KK^i(C_0(X)\rtimes_rG,B\rtimes_rG)\to KK^i(\C,B\rtimes_rG).$$
 We define the \emph{analytic assembly map} as the composition
 $$\mu_i^G=\theta^*\circ j_G:KK_G^i(C_0(X),B)\to KK^i(\C,B\rtimes_rG).$$
 

We proceed to define a version of the assembly map for the configuration space description of equivariant K-homology. 



Note that when $G=1$  the natural transformation $\A$ defined on Lemma \ref{fundamental} can be described as
$$\A(\frakb)=[(\frakb(1),\mathbf{1},0)],$$where ${\bf 1}:\C \to \mathfrak{B}(\frakb(1))$ is the canonical  inclusion. 
\begin{defin}
	Let $X$ be a proper, co-compact $G$-CW-complex, define the connective assembly map $\overline{\mu_i^G}$, as the map that complete the following commutative diagram
	$$\xymatrix{
		k_i^G(X,B)\ar[rr]^-{\A_G^i(X)}\ar[d]_{\overline{\mu}_i^G}&&KK_i^G(C_0(X),B)\ar[d]^{\mu_i^G}\\
		k_i(\{pt\},B\rtimes_rG)\ar[rr]^{\A^i(\{pt\})}&&KK_i(\C,B\rtimes_rG)}$$
\end{defin}
By Theorem \ref{s1t}, we know that $\A^i(\{pt\})$ is an isomorphism, then $$\overline{\mu}_i^G=\left(\A^i(\{pt\})\right)^{-1}\circ\mu_i^G\circ\A_G^i(X).$$

\begin{remark}
	The map $\overline{\mu}_0^G$ can be described as the map sending a configuration to the reduced crossed product of the norm closure of the direct sum of the orbits of the labels.
\end{remark}
\begin{proof}
 Let  $\mathfrak{b}\in\d_G(X,B)/G$, as we know $\mathfrak{b}$ can be identified with the $G$-orbit of a configuration $\sum_i (x_i,M_i)$ where each $x_i$ corresponds to eigenvalues and each $M_i$ corresponding to eigenspaces. On the other hand $$\mu_0^G(\A(\frakb)):\C\to\K_{B\rtimes_rG},$$is completely determined by the image of $1\in\C$, that in this case is the reduced crossed product of the Hilbert $B$-module $\bigoplus_{i}(G\cdot M_i)\subseteq \mathcal{H}_{B\rtimes_r G}$ (here we consider the topological direct sum) with the natural $G$-action. 

Then one can define a version of the assembly map for configuration spaces as 
\begin{align*}
\kcal_G(X,B)/G&\xrightarrow{\overline{\mu}_0^G}\kcal(pt,B\rtimes_rG)\\
 \overline{\sum_i^n(x_i,M_i)}&\mapsto\left(pt,\bigoplus_{i=1}^n(G\cdot M_i)\rtimes_rG\right).
\end{align*}\end{proof}

It is clear that $\overline{\mu}_i^G$ commutes with the periodicity map $\beta$, then applying Prop. \ref{recover} we have that the analytic assembly map $\mu_i^G$ can be recovered in the following way.
\begin{teor}
	Let $X$ be a proper, co-compact $G$-CW-complex, there is a commutative diagram where the horizontal arrows are isomorphism
	$$\xymatrix{
		\varinjlim_n k_{i+2n}^G(X,B)\ar[rrr]^-{\varinjlim_n\A_G^{i+2n}(X)}\ar[d]_{	\varinjlim_n\overline{\mu}_{i+2n}^G}&&&KK_i^G(C_0(X),B)\ar[d]^{\mu_i^G}\\
			\varinjlim_n k_{i+2n}(\{pt\},B\rtimes_rG)\ar[rrr]^{\varinjlim_n\A^{i+2n}(\{pt\})}&&&KK_i(\C,B\rtimes_rG)}.$$
\end{teor}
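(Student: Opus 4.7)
The plan is to assemble the result directly from the two previously established facts: the commutative square of Theorem \ref{bcconf} and the direct-limit description of equivariant KK-theory given in Prop. \ref{recover}. The only extra ingredient needed is that all the structure respects the periodicity maps $\beta$, so that passing to the colimit over $n$ preserves the diagram and turns the horizontal arrows into isomorphisms.

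First I would recall that by Prop. \ref{recover}, for any proper $G$-CW-complex $X$ the natural transformation $\varinjlim_n\A_G^{i+2n}(X)$ is an isomorphism $\varinjlim_n k_{i+2n}^G(X,B)\cong KK_i^G(C_0(X),B)$, and the same holds with $G$ trivial, $X$ replaced by a point, and $B$ replaced by $B\rtimes_rG$, giving the isomorphism $\varinjlim_n\A^{i+2n}(\{pt\})$. These give the two horizontal arrows. Thus only commutativity of the square remains.

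Next I would observe that for each fixed $n$ Theorem \ref{bcconf} (applied with index $i+2n$) already yields a commutative square
\begin{equation*}
\xymatrix{
k_{i+2n}^G(X,B)\ar[rr]^-{\A_G^{i+2n}(X)}\ar[d]_{\overline{\mu}_{i+2n}^G}&&KK_{i+2n}^G(C_0(X),B)\ar[d]^{\mu_{i+2n}^G}\\
k_{i+2n}(\{pt\},B\rtimes_rG)\ar[rr]^{\A^{i+2n}(\{pt\})}&&KK_{i+2n}(\C,B\rtimes_rG).
}
\end{equation*}
It then suffices to check that all four arrows in these squares are compatible with the transition maps of the two colimit systems, i.e.\ with the Bott periodicity map $\beta$ on the connective side and with Bott periodicity in KK-theory on the analytic side. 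The compatibility on the horizontal arrows is exactly what was used in Prop. \ref{recover}. For the left vertical arrow, the explicit formula $\overline{\sum_i(x_i,M_i)}\mapsto(pt,\bigoplus_i\overline{G\cdot M_i}\rtimes_rG)$ is manifestly constructed from the labels only, so it commutes with the suspension/Bott maps that act on the underlying configuration space. For the right vertical arrow the analogous statement is that the analytic assembly map is natural with respect to KK-equivalences, and in particular commutes with the two-fold Bott periodicity isomorphism on both sides; this is a standard property of the Kasparov descent $j_G$ composed with $\theta^*$.

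Given these compatibilities, passing to the colimit over $n$ produces the desired square, whose horizontal arrows are now isomorphisms by Prop. \ref{recover}. The main obstacle, and the only nontrivial point, is the verification that the analytic assembly map $\mu_i^G$ commutes with Bott periodicity; the other three compatibilities are essentially formal from the configuration space construction and from Prop. \ref{recover}. Once those compatibilities are recorded, the theorem follows by taking colimits of the diagrams provided by Theorem \ref{bcconf}.
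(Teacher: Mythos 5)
Your proposal is correct and follows essentially the same route as the paper, which likewise deduces the theorem by combining the level-$n$ squares of Theorem \ref{bcconf} with Prop. \ref{recover} after noting that $\overline{\mu}_i^G$ commutes with the periodicity map $\beta$. Your write-up is in fact more careful than the paper's one-line justification, since you explicitly isolate the compatibility of all four arrows (in particular of $\mu_i^G$ with Bott periodicity) as the points that need checking before passing to the colimit.
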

Finally we can define an assembly map equivalent to the analytic assembly map as follows.
\begin{teor}
	Let $G$ be a discrete group, and let $B$ be a separable $G$-C*-algebra, there is a commutative diagram
	$$\xymatrix{
		\varinjlim_{X\subseteq\scriptsize{\underbar{E}}G}\varinjlim_n k_{i+2n}^G(X,B)\ar[rrr]^-{\varinjlim_{X\subseteq\scriptsize{\underbar{E}}G}\varinjlim_n\A_G^{i+2n}(X)}\ar[d]_{\varinjlim_{X\subseteq\scriptsize{\underbar{E}}G}	\varinjlim_n\overline{\mu}_{i+2n}^G}&&&RKK_i^G(C_0(\underbar{E}G),B)\ar[d]^{\mu_i^G}\\
		\varinjlim_n k_{i+2n}(\{pt\},B\rtimes_rG)\ar[rrr]^{\varinjlim_n\A^i(\{pt\})}&&&KK_i(\C,B\rtimes_rG).}$$
	Where $X$ varies over the co-compact subsets of $\underbar{E}G$.
\end{teor}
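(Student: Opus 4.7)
The plan is to obtain the stated commutative square as the colimit, over co-compact $G$-CW-subcomplexes $X \subseteq \underline{E}G$, of the squares already established in the previous theorem together with Prop.~\ref{recover}. Since each co-compact $X \subseteq \underline{E}G$ furnishes a commutative square
$$\xymatrix{
\varinjlim_n k_{i+2n}^G(X,B)\ar[r]^-{\varinjlim_n\A_G^{i+2n}(X)}\ar[d]_{\varinjlim_n\overline{\mu}_{i+2n}^G}&KK_i^G(C_0(X),B)\ar[d]^{\mu_i^G}\\
\varinjlim_n k_{i+2n}(\{pt\},B\rtimes_rG)\ar[r]^-{\varinjlim_n\A^{i+2n}(\{pt\})}&KK_i(\C,B\rtimes_rG)}$$
whose horizontal arrows are isomorphisms, the result will drop out once I verify that every object and arrow in this square is natural with respect to $G$-equivariant inclusions $X \hookrightarrow X'$ of co-compact subcomplexes of $\underline{E}G$, and identify the colimit of each corner.

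First I would record the naturality statements. The assignment $X \mapsto \d_G(X,B)$ is covariant in $G$-maps, hence so is the connective functor $k_*^G(-,B)$; likewise the $KK$-theoretic functor $KK_*^G(C_0(-),B)$ is contravariant in $C^*$-algebras and thus covariant in $X$. The natural transformations $\A^{i+2n}_G(-)$ are defined at the level of configurations and Kasparov modules via the map $F \mapsto \A(F)$ of Lemma~\ref{fundamental}, which clearly commutes with inclusions of labels; the periodicity maps $\beta$ commute with these inclusions as well, so passage to the colimit in $n$ is also natural. Finally, the configuration-space assembly map $\overline{\mu}_{i+2n}^G$ is built purely from the orbit-plus-reduced-crossed-product recipe on labels, independent of the ambient $X$, and hence automatically respects inclusions; by construction of the previous theorem the same is true of $\mu_i^G$ via the descent morphism $j_G$ and the projection $\theta^*$ (both of which only require co-compactness of $X$ in order to produce the cutoff function $h$).

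Second, I would identify the four colimit corners. The top-right is
$$\varinjlim_{X\subseteq \underline{E}G}KK_i^G(C_0(X),B) = RKK_i^G(C_0(\underline{E}G),B)$$
by the very definition recalled in the introduction, and the right-hand vertical arrow obtained as $\varinjlim_X \mu_i^G$ is, by construction, the analytic assembly map $\mu_i^G$ out of $RKK$. The bottom row of each square depends only on $G$ and $B$, not on $X$, so its colimit over $X$ is itself; the top-left corner is the iterated colimit appearing in the theorem. Commutativity of the colimit square is then immediate from commutativity of each individual square, since $\varinjlim$ is a functor on diagrams of abelian groups.

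The only delicate point — and what I would single out as the main obstacle — is to make sure that the configuration-space assembly map is genuinely well-defined and natural at the colimit level, i.e.\ that a configuration $\overline{\sum_i(x_i,M_i)}$ supported in a co-compact $X$ is sent to the same element $\bigl(pt,\bigoplus_i \overline{G\cdot M_i}\rtimes_r G\bigr)$ regardless of which $X$ we choose to contain it. This is essentially the content of Lemma~\ref{fundamental}: the $G$-orbits $G\cdot M_i$ and their reduced crossed products are intrinsic to the configuration and do not see the ambient space. Once that independence is recorded, the four colimits assemble into the claimed commutative diagram.
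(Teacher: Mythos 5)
Your proposal is correct and follows exactly the route the paper intends: the theorem is obtained by taking the colimit over co-compact $X\subseteq\underline{E}G$ of the commutative squares of Theorem \ref{bcconf} (combined with Prop.~\ref{recover}), identifying the top-right corner with $RKK_i^G(C_0(\underline{E}G),B)$ by definition and noting the bottom row is independent of $X$. The paper leaves this colimit argument implicit, and your naturality checks supply precisely the details it omits.
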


\section{Final remarks}

The above description of the assembly map is similar to the obtained in \cite{pedersen}, it will be good to explore how to use techniques of controlled categories in the context of configuration spaces. 

Note that in this model every element in (connective) equivariant K-homology groups is represented by a diagonalizable operator, and the assembly is described just by taking the reduced product of the image of the operator. That description looks convenient to study the Baum-Connes conjecture in specific cases. For example by results in \cite{ruben}, \cite{BAVE} and \cite{BAVE1} we have explicit computations of the equivariant K-homology groups of $\operatorname{SL}(3,\Z)$, one can try to describe that elements in terms of operators appearing in this work ans compute the assembly map. We will explore that question in a future work.

\begin{comment}
contenidos...
Finally we want note that is possible define also the twisted Baum-Connes assembly map in this context. Let $\alpha\in H^2(G,S^1)$ be a twisting on the group $G$. As in Section \ref{twisted} let $G_\alpha$ the $S^1$-central extension representing $\alpha$, we already have defined the assembly map
$$k_i^{G_\alpha}(X,\C)\xrightarrow{\overline{\mu}_i^{G_\alpha}}k_i(\{pt\},C_r^*G_\alpha)$$
In \cite{tu-xu} is proved that $$C_r^*G_\alpha\cong\bigoplus_{n\in\Z}C_r^*(G,n\alpha).$$Moreover as the descent morphism and pullbacks respects this decomposition we can define the twisted assembly map
$$\overline{\mu}_i^{G,\alpha}:k_i^G(X, C_r^*(G,\alpha))\to k_i(\{pt\},C_r^*(G,\alpha) )$$ 
as the $S^1$-invariant part of the assembly map for the group $G_\alpha$
\commentm{Se debe probar esto ultimo.}
\bibliographystyle{abbrv}
\bibliography{configuration}

\end{document}